\documentclass[11pt,a4paper]{article}

\usepackage[utf8]{inputenc}
\usepackage[T1]{fontenc}
\usepackage{amsmath, amssymb, amsthm}
\usepackage{graphicx}
\usepackage{geometry}
\usepackage{cite}
\usepackage{hyperref}
\usepackage{enumitem}
\usepackage{tikz}
\usepackage{float}
\usepackage{xcolor}

\theoremstyle{plain}
\newtheorem{theorem}{Theorem}[section]

\newtheorem{corollary}[theorem]{Corollary}
\newtheorem{proposition}[theorem]{Proposition}

\theoremstyle{definition}
\newtheorem{definition}[theorem]{Definition}

\theoremstyle{remark}
\newtheorem{remark}[theorem]{Remark}

\title{Emergent Higher-Order Structure from Fast Adaptive Networks}
\author{
Christian Kuehn$^{1,2,3}$
\and
Fergal Murphy$^{1}$\thanks{Email: \texttt{fergal.murphy@tum.de}}\\[4pt]
\small $^{1}$Technical University of Munich, Department of Mathematics, Munich, Germany\\
\small $^{2}$Munich Data Science Institute (MDSI), Munich, Germany\\
\small $^{3}$Complexity Science Hub Vienna (CSH), Vienna, Austria
}
\date{\today}

\begin{document}

\maketitle

\begin{abstract}
We study adaptive network models in which coupling weights evolve on a fast time scale relative
to the phase dynamics of the nodes. Using Geometric Singular Perturbation Theory (GSPT), we prove
that, although the microscopic system is strictly pairwise, the effective slow dynamics on the
invariant slow manifold can exhibit genuinely higher-order structure. More precisely, Fenichel
reduction produces explicit $O(\varepsilon)$ triplet terms in the reduced phase dynamics. In addition, we
give a rigorous criterion ensuring that these terms are irreducible, in the sense that the reduced
vector field does not admit a pairwise decomposition in node coordinates. We derive the first-order
slow-manifold correction explicitly, formulate the irreducibility criterion via mixed second derivatives,
and verify it for the adaptive Kuramoto phase oscillator model. The results show that the class of pairwise-coupled fast--slow adaptive network systems is not closed under slow-manifold reduction.
\end{abstract}

\section{Introduction}

Adaptive network dynamical systems form a central modelling paradigm in nonlinear science~\cite{Berneretal,GrossBlasius2008}:
the interaction structure coevolves with the node states, generating feedback between
dynamics and topology. One prominent benchmark example for adaptive network dynamics includes plasticity in oscillator networks \cite{AokiAoyagi2009,Haetal,BernerSchoellYanchuk,MartensBick,KuehnYoon}. In many such models the
microscopic coupling is strictly dyadic: each node interacts with others through pairwise
terms modulated by time-dependent weights. For phase oscillator networks the classical Kuramoto model is the canonical reference point \cite{Kuramoto1984,Strogatz2000} upon which other models are built.

A common and analytically tractable regime is fast adaptation, where coupling weights
relax on a time scale much shorter than that of the node dynamics. This places adaptive
networks in the class of singularly perturbed systems and motivates Geometric Singular
Perturbation Theory (GSPT)~\cite{Fenichel1979,Kuehn2015}. In this regime,
the long-time behaviour is often organised by invariant slow manifolds on which the fast
variables are slaved to the slow node states. Expanding this slow manifold in the small
parameter $\varepsilon$ yields a systematic hierarchy of corrections to the leading-order
reduced flow on the critical manifold, which is the singular limit of the slow manifold.

In parallel, there has been substantial interest in higher-order interactions, in which the
evolution of a node depends simultaneously on more than two states. Such effects are often
represented by hypergraphs or simplicial complexes and are known to modify collective
behaviour beyond what is captured by purely pairwise models \cite{Battiston2020PhysRep,SkardalArenas1,Millan2020PRL}.
For phase oscillators, nonpairwise coupling can enlarge the admissible dynamical repertoire even in symmetric settings \cite{BickAshwinRodrigues2016Chaos}.

These developments raise a basic structural question. Higher-order structure is typically postulated at the modelling level, yet many physical and biological systems are microscopically pairwise. More rigorous mathematical derivations of higher-order structure are classically based upon phase reduction~\cite{LeonPazo,BickBoehleKuehn1} or localised near bifurcations~\cite{AshwinRodrigues,Nijholtetal}, which may not extend globally in phase space. Moreover, multi-index expressions could potentially be created or removed by general coordinate changes, so any notion of higher-order interaction should be intrinsic to the vector field and compatible with node semantics. We therefore adopt a representability-based notion: a vector field is pairwise if each component can be written as a sum of two-body terms, and genuinely higher-order otherwise. Crucially, this notion is preserved by node-respecting changes of variables and admits a simple differential certificate via mixed second derivatives.

The main question of the paper is therefore whether genuinely higher-order interaction terms can emerge intrinsically and globally from systems that are microscopically pairwise. We answer this in the affirmative for a broad class of adaptive phase oscillator networks in the fast-adaptation regime. Although the original equations are strictly dyadic, the reduced vector field on the slow manifold need not admit a pairwise decomposition in the original node coordinates. In this precise sense, higher-order structure emerges as a consequence of time-scale separation. Equivalently, the class of pairwise vector fields is not closed under slow-manifold reduction.

The mechanism is straightforward. The fast subsystem defines a normally hyperbolic critical manifold of coupling weights; Fenichel theory~\cite{Fenichel1979} provides a nearby invariant slow manifold that is a graph over the phase variables. Substituting this graph into the phase dynamics produces $O(\varepsilon)$ correction terms that depend on three phases simultaneously. The crucial point is then not merely the presence of a double sum, but whether the resulting reduced vector field is still pairwise-representable. We show that a non-zero mixed derivative with respect to two other nodes rules this out, and we verify this condition explicitly for the adaptive Kuramoto model.

The contributions of the paper are as follows.
\begin{enumerate}[label=(\arabic*)]
    \item We formulate pairwise and genuinely higher-order vector fields in a simple way that is intrinsic to node coordinates and preserved by node-respecting changes of variables.
    \item We derive the first-order slow-manifold correction for a broad class of adaptive phase oscillator networks with fast plasticity.
    \item We prove a rigorous sufficient criterion ensuring that the $O(\varepsilon)$ correction
    is genuinely higher-order, and hence that the reduced dynamics are not pairwise.
    \item We verify this criterion explicitly for the adaptive Kuramoto model and identify the
    resulting irreducible triplet terms.
\end{enumerate}

The remainder of the paper is organised as follows. Section~\ref{sec:pairwise} formalises pairwise representability, node-respecting invariance, and the mixed-derivative certificate.
Section~\ref{sec:gspt} recalls the required elements of GSPT,
including the graph expansion and the attraction towards the slow manifold. Section~\ref{sec:model}
introduces the adaptive network class, computes the reduced flow, and proves the emergence of
irreducible triplet interactions. We close with remarks on dynamical significance.

\section{Pairwise and higher-order vector fields}
\label{sec:pairwise}

Since the remainder of the paper concerns phase oscillators, we work directly on $\mathbb{T}^N$.
Let $F=(F_1,\dots,F_N):\mathbb{T}^N\to\mathbb{R}^N$ be a $C^2$ vector field.

\begin{definition}[Pairwise vector field]
\label{def:pairwise}
We say that $F$ is pairwise if, for each $i\in\{1,\dots,N\}$, there exist functions
$G_{ij}\in C^2(\mathbb{T}^2,\mathbb{R})$, $j=1,\dots,N$, such that
\begin{equation}
\label{eq:pairwise_form}
F_i(\theta)=\sum_{j=1}^N G_{ij}(\theta_i,\theta_j).
\end{equation}
We denote the class of such vector fields by $\mathcal{V}_{\mathrm{pair}}$.
\end{definition}

This definition includes intrinsic terms depending only on $\theta_i$, since these can be absorbed
into the $j=i$ summand. In particular, uncoupled dynamics $\dot\theta_i=f_i(\theta_i)$ and constant
drifts are contained in $\mathcal{V}_{\mathrm{pair}}$. No symmetry or homogeneity assumptions are
imposed, and the representation \eqref{eq:pairwise_form} need not be unique.

\begin{definition}[Genuinely higher-order vector field]
\label{def:ho}
We say that $F$ is genuinely higher-order if
\[
F\notin \mathcal{V}_{\mathrm{pair}}.
\]
\end{definition}

The relevant notion of coordinate change is restricted by node semantics.

\begin{definition}[Node-respecting diffeomorphism]
\label{def:node_respecting}
A diffeomorphism $\Phi:\mathbb{T}^N\to\mathbb{T}^N$ is node-respecting if it has the form
\[
\Phi(\theta_1,\dots,\theta_N)
=
\bigl(\phi_1(\theta_{\sigma(1)}),\dots,\phi_N(\theta_{\sigma(N)})\bigr),
\]
where $\sigma\in S_N$ and each $\phi_i:\mathbb{T}\to\mathbb{T}$ is a $C^3$ diffeomorphism.
\end{definition}

\begin{proposition}[Invariance of the pairwise class]
\label{prop:pairwise_invariant}
If $F\in \mathcal{V}_{\mathrm{pair}}$ and $\Phi$ is node-respecting, then the pushforward vector
field $\Phi_*F$ also belongs to $\mathcal{V}_{\mathrm{pair}}$.
\end{proposition}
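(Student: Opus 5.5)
The plan is to compute the pushforward explicitly and read off a pairwise representation. Write $\Phi(\theta)=\psi$ with $\psi_k=\phi_k(\theta_{\sigma(k)})$. The Jacobian $D\Phi$ is then a permutation matrix with diagonal scaling: $\partial\psi_k/\partial\theta_l=\phi_k'(\theta_{\sigma(k)})\,\delta_{l,\sigma(k)}$. By definition of pushforward, $(\Phi_*F)(\psi)=D\Phi(\Phi^{-1}(\psi))\,F(\Phi^{-1}(\psi))$. Because $\Phi$ is a diffeomorphism, each $\phi_k$ is a diffeomorphism of $\mathbb{T}$ (as required in Definition~\ref{def:node_respecting}), so $\Phi^{-1}$ is again node-respecting. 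Concretely, $\theta_{\sigma(k)}=\phi_k^{-1}(\psi_k)$, i.e.\ $\theta_m=\phi_{\sigma^{-1}(m)}^{-1}(\psi_{\sigma^{-1}(m)})$.

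The key identity is therefore
\[
(\Phi_*F)_k(\psi)=\phi_k'\bigl(\phi_k^{-1}(\psi_k)\bigr)\,F_{\sigma(k)}\bigl(\Phi^{-1}(\psi)\bigr).
\]
Next substitute the pairwise form \eqref{eq:pairwise_form} with $i=\sigma(k)$:
\[
F_{\sigma(k)}(\theta)=\sum_{j}G_{\sigma(k)j}(\theta_{\sigma(k)},\theta_j).
\]
Reindex by $j=\sigma(l)$. Then $\theta_{\sigma(k)}=\phi_k^{-1}(\psi_k)$ and $\theta_{\sigma(l)}=\phi_l^{-1}(\psi_l)$. Hence
\[
(\Phi_*F)_k(\psi)=\sum_{l=1}^N \widetilde G_{kl}(\psi_k,\psi_l),\qquad
\widetilde G_{kl}(x,y):=\phi_k'\bigl(\phi_k^{-1}(x)\bigr)\,G_{\sigma(k)\sigma(l)}\bigl(\phi_k^{-1}(x),\phi_l^{-1}(y)\bigr).
\]
The prefactor depends only on $\psi_k$, so it can be multiplied into each summand without destroying the two-body structure. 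This is the crucial point, and it works precisely because the Jacobian is diagonal up to permutation. The $l=k$ summand depends only on $\psi_k$, which is allowed by the remark following Definition~\ref{def:pairwise}.

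The remaining step is the regularity check, which is the only place needing care. Since $\phi_k\in C^3$, the derivative $\phi_k'$ is $C^2$. The inverse $\phi_k^{-1}$ is $C^3$ by the inverse function theorem, using $\phi_k'\neq0$, which holds for a diffeomorphism. Composition and products of $C^2$ functions are $C^2$, so $\widetilde G_{kl}\in C^2(\mathbb{T}^2,\mathbb{R})$. This explains why the definition asks for $C^3$ rather than $C^2$ diffeomorphisms. Finally, note that $\phi_k'$ is a well-defined periodic function on $\mathbb{T}$, since it is the derivative of a lift, so $\widetilde G_{kl}$ is well defined on the torus. This gives $\Phi_*F\in\mathcal{V}_{\mathrm{pair}}$.
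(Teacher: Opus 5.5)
Your proof is correct and takes essentially the same route as the paper. The chain rule gives a prefactor $\phi_k'\circ\phi_k^{-1}$ that depends only on the node's own coordinate; you absorb it into each two-body summand and then run the same $C^2$ regularity check using $\phi_k\in C^3$. The one difference is that you carry the permutation $\sigma$ through explicitly via the reindexing $j=\sigma(l)$, whereas the paper sets it aside as evidently harmless and treats only the diagonal case $y_i=\phi_i(\theta_i)$.
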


\begin{proof}
It suffices to consider the case
\[
y_i=\phi_i(\theta_i), \qquad i=1,\dots,N,
\]
since permutation of coordinates clearly preserves pairwise form. Suppose
\[
\dot\theta_i = \sum_{j=1}^N G_{ij}(\theta_i,\theta_j).
\]
Then
\[
\dot y_i
= \phi_i'(\theta_i)\dot\theta_i
= \sum_{j=1}^N \phi_i'(\theta_i)G_{ij}(\theta_i,\theta_j).
\]
Writing $\theta_i=\phi_i^{-1}(y_i)$ and $\theta_j=\phi_j^{-1}(y_j)$, define
\[
\widetilde G_{ij}(u,v)
:=
(\phi_i'\circ \phi_i^{-1})(u)
G_{ij}(\phi_i^{-1}(u),\phi_j^{-1}(v)).
\]
Since $\phi_i\in C^3$, its inverse $\phi_i^{-1}$ is also $C^3$, so the factor
$\phi_i'\circ\phi_i^{-1}$ is $C^2$ (composition of a $C^2$ function with a $C^3$ map).
The second factor $G_{ij}(\phi_i^{-1}(\cdot),\phi_j^{-1}(\cdot))$ is $C^2$ by the chain rule,
since $G_{ij}\in C^2$ and $\phi_i^{-1},\phi_j^{-1}\in C^3$.
Hence $\widetilde G_{ij}\in C^2(\mathbb{T}^2,\mathbb{R})$.
Then
\[
\dot y_i = \sum_{j=1}^N \widetilde G_{ij}(y_i,y_j),
\]
so the transformed vector field is again pairwise.
\end{proof}

\begin{remark}[Coordinate artefacts versus emergence]
Allowing arbitrary diffeomorphisms that mix node coordinates can transform a pairwise vector
field into one with multi-index terms, and vice versa; this is a coordinate artefact. In this paper
we regard two descriptions as equivalent only when they are related by a node-respecting change of variables from Definition~\ref{def:node_respecting}.
Hence higher-order structure in our results is not produced by ``smart'' coordinate mixing, but by
an analytic reduction mechanism (here: Fenichel slow-manifold reduction). Of course, there are more elaborate frameworks to algebraically define higher-order interactions~\cite{AguiarBickDias}, e.g., techniques based upon coupled cell networks~\cite{GolubitskyStewartToeroek} but for our purpose a simpler self-contained approach suffices to illustrate our main point. 
\end{remark}

A convenient sufficient certificate for nonpairwise structure is provided by mixed second derivatives
with respect to two distinct other nodes.

\begin{proposition}[Mixed-derivative vanishing for pairwise fields]
\label{prop:pairwise_mixed}
If $F\in \mathcal{V}_{\mathrm{pair}}$, then for all distinct indices $i,j,k$ one has
\begin{equation}
\label{eq:mixed_zero}
\partial_{\theta_j}\partial_{\theta_k}F_i \equiv 0.
\end{equation}
\end{proposition}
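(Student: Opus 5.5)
The plan is a direct term-by-term differentiation of the representation \eqref{eq:pairwise_form}. Fix $i$ and distinct $j,k$ with $j\neq i$, $k\neq i$. By Definition~\ref{def:pairwise} we write $F_i(\theta)=\sum_{l=1}^N G_{il}(\theta_i,\theta_l)$ with each $G_{il}\in C^2(\mathbb{T}^2,\mathbb{R})$. Hence $F_i$ is $C^2$ and the mixed partial $\partial_{\theta_j}\partial_{\theta_k}F_i$ exists and is continuous. By linearity, it equals $\sum_l \partial_{\theta_j}\partial_{\theta_k}\bigl[G_{il}(\theta_i,\theta_l)\bigr]$. It therefore suffices to show that each summand vanishes identically.

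For a fixed $l$, the function $\theta\mapsto G_{il}(\theta_i,\theta_l)$ depends only on the coordinates $\theta_i$ and $\theta_l$. We split into cases.
\begin{enumerate}[label=(\roman*)]
\item If $l\neq k$, then the summand does not depend on $\theta_k$, because $k\neq i$ as well. Its $\theta_k$-derivative is already zero, so the mixed derivative vanishes.
\item If $l=k$, then $l\neq j$ since $j\neq k$, and also $j\neq i$. So the $\theta_k$-derivative $\partial_2 G_{ik}(\theta_i,\theta_k)$ is a function of $(\theta_i,\theta_k)$ only and does not depend on $\theta_j$. Its $\theta_j$-derivative is therefore zero.
\end{enumerate}
The term $l=i$, the intrinsic part, falls under case (i). Summing over $l$ then gives $\partial_{\theta_j}\partial_{\theta_k}F_i\equiv 0$ on all of $\mathbb{T}^N$.

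The only point needing a little care is the order of differentiation and the regularity. We differentiate in the order $\partial_{\theta_k}$ first and then $\partial_{\theta_j}$. Since $F_i\in C^2$, Schwarz's theorem makes the order irrelevant. We also note that working on $\mathbb{T}^N$ changes nothing, because the derivatives are local and computed in any lift to $\mathbb{R}^N$. No genuine obstacle is expected; the argument is a bookkeeping of which variables each two-body term depends on. Non-uniqueness of the representation is harmless, since the conclusion concerns $F_i$ itself and any representation suffices.
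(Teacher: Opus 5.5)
Your proposal is correct and follows the same approach as the paper: differentiate the pairwise representation term by term and observe that each summand $G_{il}(\theta_i,\theta_l)$ depends on at most one of $\theta_j,\theta_k$. Your case split and regularity remarks simply spell out what the paper states in a single line.
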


\begin{proof}
By \eqref{eq:pairwise_form},
\[
F_i(\theta)=\sum_{\ell=1}^N G_{i\ell}(\theta_i,\theta_\ell).
\]
If $j\neq k$, then each summand depends on at most one of $\theta_j$ and $\theta_k$, so
\(\partial_{\theta_j}\partial_{\theta_k}\) annihilates every term.
\end{proof}

\begin{corollary}[Mixed-derivative certificate]
\label{cor:irreducibility}
If there exist distinct indices $i,j,k$ and a point $\theta^\ast\in\mathbb{T}^N$ such that
\[
\partial_{\theta_j}\partial_{\theta_k}F_i(\theta^\ast)\neq 0,
\]
then $F\notin \mathcal{V}_{\mathrm{pair}}$.
\end{corollary}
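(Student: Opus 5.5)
The plan is to argue by contraposition, directly from Proposition~\ref{prop:pairwise_mixed}. We assume, for contradiction, that $F\in\mathcal{V}_{\mathrm{pair}}$. Since $F$ is $C^2$ by standing assumption, the mixed partial $\partial_{\theta_j}\partial_{\theta_k}F_i$ is a well-defined continuous function on $\mathbb{T}^N$. It therefore makes sense to evaluate it pointwise at $\theta^\ast$.

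The key step is to invoke Proposition~\ref{prop:pairwise_mixed} for the given distinct indices $i,j,k$. It gives the identity $\partial_{\theta_j}\partial_{\theta_k}F_i\equiv 0$ on all of $\mathbb{T}^N$, and in particular
\[
\partial_{\theta_j}\partial_{\theta_k}F_i(\theta^\ast)=0 .
\]
This contradicts the hypothesis $\partial_{\theta_j}\partial_{\theta_k}F_i(\theta^\ast)\neq 0$, so $F\notin\mathcal{V}_{\mathrm{pair}}$. By Definition~\ref{def:ho}, $F$ is then genuinely higher-order.

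There is no substantive obstacle; the only point to check is regularity. The identity in Proposition~\ref{prop:pairwise_mixed} holds pointwise everywhere, not merely almost everywhere, because each $G_{i\ell}$ is $C^2$ and each summand is independent of at least one of $\theta_j,\theta_k$. Pointwise evaluation at the single point $\theta^\ast$ is therefore legitimate. One may also remark that, by Proposition~\ref{prop:pairwise_invariant}, the certificate is meaningful up to node-respecting coordinate changes. If $\Phi_*F$ were pairwise for some node-respecting $\Phi$, then so would be $F=(\Phi^{-1})_*\Phi_*F$, since the inverse of a node-respecting diffeomorphism is again node-respecting. Hence a nonvanishing mixed derivative in any one node-respecting chart rules out pairwise structure in all of them.
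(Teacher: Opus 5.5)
Your proposal is correct and takes the same approach as the paper: the paper gives no separate proof, because the corollary is just the contrapositive of Proposition~\ref{prop:pairwise_mixed}. Your remark on node-respecting invariance is also correct, including the point that the inverse of a node-respecting diffeomorphism is again node-respecting, and it matches the paper's comment following the corollary.
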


This criterion is used below to rule out pairwise representability in the original node coordinates;
by Proposition~\ref{prop:pairwise_invariant}, the conclusion is unchanged under node-respecting
reparametrisations.

\section{Geometric Singular Perturbation Theory}
\label{sec:gspt}

We recall the fast--slow framework and the graph expansion used later; see
\cite{Fenichel1979,Kuehn2015} for background. Consider a fast--slow system in slow time $\tau$,
\begin{align}
    \varepsilon \dot{x} &= f(x,y,\varepsilon), \label{eq:fast_sys}\\
    \dot{y} &= g(x,y,\varepsilon), \label{eq:slow_sys}
\end{align}
with $x\in\mathbb{R}^m$ fast, $y\in\mathbb{R}^n$ slow, and $0<\varepsilon\ll 1$. Passing to fast time
$s=\tau/\varepsilon$ gives
\[
x' = f(x,y,\varepsilon), \qquad y' = \varepsilon g(x,y,\varepsilon).
\]

\subsection{Limiting problems and normal hyperbolicity}

Setting $\varepsilon=0$ in \eqref{eq:fast_sys}--\eqref{eq:slow_sys} yields the reduced problem
\begin{equation}
\label{eq:reduced}
    0=f(x,y,0), \qquad \dot{y}=g(x,y,0),
\end{equation}
and the layer problem
\begin{equation}
\label{eq:layer}
    x'=f(x,y,0), \qquad y'=0.
\end{equation}
The equilibrium set of the layer problem,
\begin{equation}
\label{eq:C0}
    C_0=\{(x,y): f(x,y,0)=0\},
\end{equation}
is the critical manifold. A compact submanifold $M_0\subset C_0$ is normally hyperbolic if the
spectrum of $\textnormal{D}_x f(x,y,0)$ along $M_0$ is bounded away from the imaginary axis.

\begin{theorem}[Fenichel {\cite{Fenichel1979,Kuehn2015}}]
\label{thm:fenichel}
{Assume that $f$ and $g$ are $C^{r}$ for some finite $r\geq 1$, and let
$M_0\subset C_0$ be a compact normally hyperbolic submanifold. Then for all sufficiently small
$\varepsilon>0$ there exists a locally invariant $C^r$ manifold $M_\varepsilon$ that is
$O(\varepsilon)$-close to $M_0$ and diffeomorphic to $M_0$. After identifying $M_\varepsilon$
with $M_0$ via a Fenichel parametrisation, the reduced vector field on $M_\varepsilon$
converges to the reduced vector field on $M_0$ in $C^r$ as $\varepsilon\to 0$.}
\end{theorem}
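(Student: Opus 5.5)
Fenichel's theorem is cited from \cite{Fenichel1979,Kuehn2015}, so the plan is to outline the standard proof rather than derive anything new. I would follow Fenichel's geometric approach, in the form presented in \cite{Kuehn2015}.

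\textbf{Step 1: set up the local description.} Since $M_0$ is compact and normally hyperbolic, the implicit function theorem applies: $\textnormal{D}_x f(x,y,0)$ is invertible along $M_0$. Hence, near $M_0$, the critical manifold is locally a graph $x=h_0(y)$ of class $C^r$, defined over a compact domain $K$.

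\textbf{Step 2: make the problem globally hyperbolic.} Work in fast time $s$. Treat $\varepsilon$ as an additional state variable satisfying $\varepsilon'=0$. In the extended system, $M_0$ becomes a manifold of equilibria. Its tangent directions have zero eigenvalues, and its normal directions have eigenvalues bounded away from the imaginary axis. I would then modify the vector field outside a neighbourhood of $K$ using a smooth bump function, so that the manifold becomes overflowing or inflowing invariant and the estimates below hold globally. This modification is where the conclusion becomes \emph{locally} invariant and where $M_\varepsilon$ loses uniqueness.

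\textbf{Step 3: construct the perturbed manifold by a graph transform.} Consider the Banach space of Lipschitz graphs $x=h(y)$ over $K$ lying within a small $C^0$-distance of $h_0$. On this space, define the map that flows a graph forward for a fixed time $T$. The spectral gap makes this map a contraction: in the normal direction the flow contracts or expands at rate $e^{-\mu T}$, while the slow drift along the manifold is only $O(\varepsilon T)$. For an attracting manifold one uses the forward flow. In the saddle case one first builds the centre-stable and centre-unstable manifolds separately and intersects them. The fixed point of this map is an invariant graph $h_\varepsilon$. It is $O(\varepsilon)$-close to $h_0$ because the defect of $h_0$ is $f(h_0(y),y,\varepsilon)=O(\varepsilon)$, and a contraction moves its starting point by at most a constant times the defect.

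\textbf{Step 4: establish $C^r$ smoothness.} This is the main obstacle. One applies the fibre contraction theorem successively to the induced action on $k$-jets of the graph, for $k\le r$. Each step requires that normal contraction dominates the $k$-th power of the tangential growth rate. Here the tangential rates are $O(\varepsilon)$, so this rate condition holds for every finite $r$ once $\varepsilon$ is small enough. The same condition explains why the theorem is restricted to finite $r$.

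\textbf{Step 5: the reduced flow.} Parametrise $M_\varepsilon$ over $y$ by $y\mapsto(h_\varepsilon(y),y)$. The reduced vector field is then $g(h_\varepsilon(y),y,\varepsilon)$. Because $h_\varepsilon\to h_0$ in $C^r$ as $\varepsilon\to0$, this vector field converges in $C^r$ to $g(h_0(y),y,0)$, which is the reduced vector field on $M_0$.
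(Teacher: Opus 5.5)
The paper gives no proof of this theorem. It is quoted from \cite{Fenichel1979,Kuehn2015}, and your outline is the standard argument of those sources: append $\varepsilon'=0$, cut off to get an overflowing manifold, use the graph transform for existence and $O(\varepsilon)$-closeness, and use fibre contraction on jets for $C^r$.

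\textbf{Gap: $C^r$ convergence in Step 5.} Step 5 assumes, rather than proves, that $h_\varepsilon\to h_0$ in $C^r$. Step 3 gives only $C^0$-closeness. Step 4, as you set it up (graphs $x=h(y)$ over $K$ at a fixed $\varepsilon$, rate condition checked for that $\varepsilon$), gives $h_\varepsilon\in C^r$ for each small $\varepsilon$, at best with $C^r$ bounds uniform in $\varepsilon$. Uniform $C^r$ bounds together with $C^0$ convergence yield only $C^{r-1}$ convergence, by Arzel\`a--Ascoli. Convergence of $\textnormal{D}^r h_\varepsilon$ needs equicontinuity, or continuity in $\varepsilon$, of the top jet, and nothing in your argument supplies this. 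As written, Step 5 therefore gives convergence of the reduced vector field only in $C^{r-1}$.

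\textbf{Repair.} Use the device you introduced in Step 2 and then dropped. Run the graph transform and the jet fibre contraction on the extended system in $(x,y,\varepsilon)$, with graphs $x=h(y,\varepsilon)$ over $K\times[0,\varepsilon_0]$. Along $M_0\times\{0\}$ the tangential linearisation is nilpotent ($\delta y'=g\,\delta\varepsilon$, $\delta\varepsilon'=0$), so it grows only polynomially. For $\varepsilon>0$ the tangential rates are $O(\varepsilon)$. Hence the $C^r$ rate condition holds for every finite $r$ on some $[0,\varepsilon_0(r)]$. This makes $h$ jointly $C^r$ in $(y,\varepsilon)$. Uniform continuity of $\textnormal{D}_y^k h$, $k\le r$, on the compact set $K\times[0,\varepsilon_0]$ then gives $h_\varepsilon\to h_0$ in $C^r$, after which your chain-rule argument in Step 5 goes through. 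Alternatively, since $f,g$ are $C^r$ jointly in $\varepsilon$, you can invoke continuous dependence on the parameter of the fixed point of a uniform fibre contraction. Joint regularity in $(y,\varepsilon)$ is also what underlies the paper's later use of the $C^2$ expansion \eqref{eq:expansion} under $C^3$ hypotheses.

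\textbf{Minor point on Step 1.} A compact normally hyperbolic $M_0$ projects to $y$-space only by a local diffeomorphism, so a single graph over $K$ is an extra assumption. In general one works in tubular-neighbourhood coordinates, although in the paper's application $M_0$ is a global graph over $\mathbb{T}^N$.
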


\begin{remark}[Stable fibres and the initial fast transient]
\label{rem:stable_fibres}
Fenichel theory gives more than persistence of $M_\varepsilon$. Near a normally hyperbolic attracting slow
manifold, points can be grouped into local stable sets (often called stable fibres): each such set
consists of initial conditions whose trajectories rapidly converge to the same trajectory on $M_\varepsilon$.

Concretely, for all sufficiently small $\varepsilon$ there exist constants $C,c>0$ and a neighbourhood $U$ of
$M_\varepsilon$ such that every $z_0\in U$ is associated with some $z_0^\ast\in M_\varepsilon$, and the corresponding
trajectories satisfy
\begin{equation}
\label{eq:stable_estimate}
\operatorname{dist}\bigl(z(\tau),z^\ast(\tau)\bigr)
\le C e^{-c\tau/\varepsilon}\operatorname{dist}(z_0,z_0^\ast)
\end{equation}
in slow time, for as long as both trajectories remain in $U$.

Hence there is an initial fast transient of length $O(\varepsilon)$ in slow time (equivalently $O(1)$ in fast
time), after which trajectories are exponentially close to the slow manifold. More precisely, entering a fixed
neighbourhood of $M_\varepsilon$ takes slow time $O(\varepsilon)$, while entering an $O(\varepsilon)$ neighbourhood
takes slow time $O(\varepsilon |\log \varepsilon|)$. Therefore the reduced dynamics on $M_\varepsilon$ describe the
long-time motion of an open set of initial conditions, not only trajectories initialised exactly on the
manifold.
\end{remark}

\subsection{Graph case and first-order expansion}

In our application the slow manifold is a graph over the slow variables. {Let $Y$ be a compact smooth manifold and suppose
$M_0$ is given as the graph $x=h_0(y)$ over $Y$, for a $C^3$ map $h_0:Y\to\mathbb{R}^m$, with $M_0$
normally hyperbolic. Working in local coordinates on $Y$, assume in addition that $h_0$ and the full
fast--slow vector field admit $C^3$ extensions to neighbourhoods in the corresponding coordinate charts.
Then for $\varepsilon$ small, $M_\varepsilon$ is also a graph $x=h_\varepsilon(y)$, and the Fenichel graph
admits the first-order expansion}
\begin{equation}
\label{eq:expansion}
    h_\varepsilon(y)=h_0(y)+\varepsilon h_1(y)+o(\varepsilon)
\end{equation}
{in $C^2(Y)$ as $\varepsilon\to 0$. In the later mixed-derivative argument we use exactly this
first-order $C^2$ expansion.}

The coefficients are determined by the invariance equation. If $x=h_\varepsilon(y)$ parametrises
$M_\varepsilon$, then along trajectories on $M_\varepsilon$ one has
\(\dot x = \textnormal{D}_y h_\varepsilon(y)\dot y\), and therefore
\begin{equation}
\label{eq:invariance}
    f(h_\varepsilon(y),y,\varepsilon)
    = \varepsilon \textnormal{D}_y h_\varepsilon(y) g(h_\varepsilon(y),y,\varepsilon).
\end{equation}

\begin{proposition}[First-order correction in the graph case]
\label{prop:h1}
Assume $f,g$ are $C^2$ and that $f$ has no explicit $\varepsilon$-dependence. Then the coefficient
$h_1$ in \eqref{eq:expansion} is the unique solution of
\begin{equation}
\label{eq:h1_linear}
\textnormal{D}_x f(h_0(y),y) h_1(y) = \textnormal{D}_y h_0(y) g(h_0(y),y,0),
\end{equation}
and hence
\begin{equation}
\label{eq:h1}
h_1(y)=\bigl[\textnormal{D}_x f(h_0(y),y)\bigr]^{-1} \textnormal{D}_y h_0(y) g(h_0(y),y,0).
\end{equation}
\end{proposition}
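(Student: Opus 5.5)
The plan is to substitute the expansion \eqref{eq:expansion} into the invariance equation \eqref{eq:invariance} and compare terms of order $\varepsilon^0$ and $\varepsilon^1$. Uniqueness will then follow from normal hyperbolicity, which makes $\textnormal{D}_x f(h_0(y),y)$ invertible. The setting of the previous subsection supplies three facts: $M_\varepsilon$ is the graph of $h_\varepsilon$, the expansion $h_\varepsilon=h_0+\varepsilon h_1+o(\varepsilon)$ holds in $C^2(Y)$ and in particular in $C^1$, and $h_\varepsilon$ satisfies \eqref{eq:invariance} exactly for each small $\varepsilon$.

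\textbf{Step 1 (zeroth order).} Since $f$ has no explicit $\varepsilon$-dependence, \eqref{eq:invariance} reads $f(h_\varepsilon(y),y)=\varepsilon\,\textnormal{D}_y h_\varepsilon(y)\,g(h_\varepsilon(y),y,\varepsilon)$. Letting $\varepsilon\to0$ gives $f(h_0(y),y)=0$. This is consistent with $M_0\subset C_0$.

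\textbf{Step 2 (Taylor expansion of the left side).} Because $f\in C^2$ and $h_\varepsilon-h_0=\varepsilon h_1+o(\varepsilon)$ uniformly on the compact set $Y$, Taylor's theorem with remainder gives
\[
f(h_\varepsilon(y),y)=f(h_0(y),y)+\textnormal{D}_x f(h_0(y),y)\bigl(\varepsilon h_1(y)+o(\varepsilon)\bigr)+O(\varepsilon^2).
\]
The remainder is bounded by $\sup\|\textnormal{D}_x^2 f\|\,\|h_\varepsilon-h_0\|_{C^0}^2$, which is $O(\varepsilon^2)$ because $\|h_\varepsilon-h_0\|_{C^0}=O(\varepsilon)$. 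Combined with Step 1, the left side equals $\varepsilon\,\textnormal{D}_x f(h_0,y)h_1+o(\varepsilon)$.

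\textbf{Step 3 (expansion of the right side).} By the $C^1$ convergence, $\textnormal{D}_y h_\varepsilon\to\textnormal{D}_y h_0$ uniformly. By continuity of $g$ together with $h_\varepsilon\to h_0$, $g(h_\varepsilon(y),y,\varepsilon)\to g(h_0(y),y,0)$ uniformly. So the right side equals $\varepsilon\,\textnormal{D}_y h_0(y)\,g(h_0(y),y,0)+o(\varepsilon)$. Dividing both sides by $\varepsilon$ and letting $\varepsilon\to0$ yields \eqref{eq:h1_linear} pointwise in $y$.

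\textbf{Step 4 (uniqueness and the explicit formula).} Normal hyperbolicity says the spectrum of $\textnormal{D}_x f(h_0(y),y)$ is bounded away from the imaginary axis. In particular $0$ is not an eigenvalue, so the matrix is invertible at every $y\in Y$. Equation \eqref{eq:h1_linear} therefore has exactly one solution, given by \eqref{eq:h1}. This formula involves only $h_0$, $f$ and $g$, so $h_1$ is uniquely determined independently of the particular Fenichel manifold chosen. Since the inverse depends continuously on $y$, $h_1$ is also continuous, consistent with the regularity assumed in \eqref{eq:expansion}.

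The main obstacle is the regularity bookkeeping. The argument needs $o(\varepsilon)$ control of $\textnormal{D}_y h_\varepsilon$ as well as of $h_\varepsilon$, because the right side of \eqref{eq:invariance} contains the derivative. This is exactly why the $C^2$ (hence $C^1$) sense of the expansion \eqref{eq:expansion} is needed rather than mere $C^0$ closeness. With that in hand, the rest is routine Taylor matching.
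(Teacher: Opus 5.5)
Your proposal is correct and follows essentially the same route as the paper: you substitute the expansion into the invariance equation, use $f(h_0(y),y)=0$ at zeroth order, subtract, divide by $\varepsilon$ and let $\varepsilon\to 0$, and then invoke invertibility of $\textnormal{D}_x f(h_0(y),y)$ from normal hyperbolicity; your Steps 2--3 just make explicit the Taylor-remainder and uniform-convergence estimates that the paper compresses into one line. One small overstatement in your closing paragraph: on the right-hand side you only need $\textnormal{D}_y h_\varepsilon\to\textnormal{D}_y h_0$ uniformly, which is an $o(1)$ statement and is what Step 3 actually uses, rather than $o(\varepsilon)$ control of the derivative.
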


\begin{proof}
Insert \eqref{eq:expansion} into \eqref{eq:invariance}. The $O(1)$ term is
$f(h_0(y),y)=0$, so $M_0\subset C_0$. Subtracting this identity from \eqref{eq:invariance},
dividing by \(\varepsilon\), and letting \(\varepsilon\to 0\) yields
\[
\textnormal{D}_x f(h_0(y),y) h_1(y) = \textnormal{D}_y h_0(y) g(h_0(y),y,0),
\]
because the right-hand side of \eqref{eq:invariance} carries an explicit factor $\varepsilon$ and the
remainder in \eqref{eq:expansion} is $o(\varepsilon)$ in \(C^2(Y)\).
Normal hyperbolicity implies that $\textnormal{D}_x f(h_0(y),y)$ is invertible on $Y$, and this gives
\eqref{eq:h1}.
\end{proof}

\begin{remark}
If $f$ depends explicitly on $\varepsilon$, then an additional term
$\partial_\varepsilon f(h_0(y),y,0)$ appears on the left-hand side of \eqref{eq:h1_linear}.
We restrict to the case without explicit $\varepsilon$-dependence, which covers the adaptive network
models considered below.
\end{remark}

\section{Adaptive network models and emergent triplet interactions}
\label{sec:model}

We now apply the fast--slow framework of Section~\ref{sec:gspt} to adaptive phase oscillator
networks with rapidly evolving coupling weights. The original microscopic equations are strictly
pairwise. The point of the section is to show that elimination of the fast variables can nevertheless
produce a reduced vector field which is not pairwise-representable.
To match the notation in Section~\ref{sec:gspt}, we identify
\(x=A\), \(y=\theta\), \(f_{\mathrm{fast}}:=g\), and \(g_{\mathrm{slow}}:=f\).

\subsection{Model class}

\begin{definition}[Adaptive network model]
\label{def:model}
Let $0<\varepsilon\ll 1$, let $\theta=(\theta_1,\dots,\theta_N)\in\mathbb{T}^N$ be oscillator phases and let
$A=(a_{ij})\in\mathbb{R}^{N\times N}$ be a matrix of coupling weights. On
$\mathcal{X}=\mathbb{T}^N\times\mathbb{R}^{N\times N}$ we consider
\begin{align}
    \dot{\theta} &= f(\theta,A), \label{eq:gen_slow}\\
    \varepsilon \dot{A} &= g(\theta,A), \label{eq:gen_fast}
\end{align}
with components
\begin{align}
    f_i(\theta,A)
    &= \omega_i + \frac{1}{N}\sum_{j=1}^N a_{ij}\,\Gamma(\theta_j-\theta_i),
    \label{eq:f_comp}\\
    g_{ij}(\theta,A)
    &= -a_{ij}+H(\theta_i,\theta_j).
    \label{eq:g_comp}
\end{align}
Here $\omega_i\in\mathbb{R}$ are natural frequencies, and
$\Gamma\in C^2(\mathbb{T},\mathbb{R})$ and $H\in C^2(\mathbb{T}^2,\mathbb{R})$ are
$2\pi$-periodic in each argument.
\end{definition}

\begin{remark}[Adaptive Kuramoto]
\label{rem:kuramoto}
The classical adaptive Kuramoto model corresponds to
\[
\Gamma(\phi)=\sin \phi, \qquad H(u,v)=\alpha+\cos(u-v).
\]
Yet, it is important to note already that our approach applies to much broader classes of adaptive network dynamical systems.
\end{remark}
\subsection{Critical manifold and slow manifold}

Setting $\varepsilon=0$ in \eqref{eq:gen_fast} yields $a_{ij}=H(\theta_i,\theta_j)$, hence the
critical manifold is the graph
\begin{equation}
\label{eq:M0}
M_0 = \{(\theta,A): A=h_0(\theta)\},
\qquad
h_{0,ij}(\theta)=H(\theta_i,\theta_j).
\end{equation}

\begin{proposition}[Global normal hyperbolicity]
\label{prop:hyperbolic}
{$M_0$ is compact and normally hyperbolic. In particular, for all sufficiently small
$\varepsilon>0$ there exists a locally invariant slow manifold}
\[
M_\varepsilon = \{(\theta,A):A=h_\varepsilon(\theta)\}.
\]
{If $\Gamma\in C^3(\mathbb{T})$ and $H\in C^3(\mathbb{T}^2)$, then}
\[
h_\varepsilon(\theta)=h_0(\theta)+\varepsilon h_1(\theta)+o(\varepsilon)
\]
{in the $C^2$ topology.}
\end{proposition}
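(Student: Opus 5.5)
The argument is direct: $M_0$ is a graph over a compact base, and the fast linearisation is minus the identity. Everything then follows from Theorem~\ref{thm:fenichel} and the graph-case expansion of Section~\ref{sec:gspt}.

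\emph{Compactness.} By \eqref{eq:M0}, $M_0$ is the image of $\mathbb{T}^N$ under the continuous map $\theta\mapsto(\theta,h_0(\theta))$. Since $\mathbb{T}^N$ is compact, $M_0$ is compact. Because $H\in C^2$, the map $h_0$ is $C^2$ and $M_0$ is an embedded $C^2$ submanifold diffeomorphic to $\mathbb{T}^N$. No restriction to a subset of the critical manifold is needed, so $M_0=C_0$ entirely.

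\emph{Normal hyperbolicity.} The fast vector field \eqref{eq:g_comp} is affine in $A$ with linear part $-A$. Hence $\textnormal{D}_A g(\theta,A)=-\mathrm{Id}_{N^2}$ at every point, independently of $\theta$ and $A$. Its spectrum is $\{-1\}$, uniformly bounded away from the imaginary axis, so $M_0$ is uniformly normally hyperbolic and in fact attracting. Theorem~\ref{thm:fenichel} then yields a locally invariant manifold $M_\varepsilon$, $O(\varepsilon)$-close and diffeomorphic to $M_0$, for all sufficiently small $\varepsilon>0$.

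\emph{Graph property.} I would argue that $M_\varepsilon$ is a graph over $\theta$ as follows. Since $M_\varepsilon$ is $C^1$-close to $M_0$, the projection $\pi:(\theta,A)\mapsto\theta$ restricted to $M_\varepsilon$ is $C^1$-close to $\pi|_{M_0}$, which is a diffeomorphism onto $\mathbb{T}^N$. Diffeomorphisms of a compact manifold are $C^1$-open, so $\pi|_{M_\varepsilon}$ is again a diffeomorphism onto $\mathbb{T}^N$. Setting $h_\varepsilon:=\pi_A\circ(\pi|_{M_\varepsilon})^{-1}$ gives the graph representation. A subtlety is that $\mathcal{X}$ is noncompact in the $A$-direction. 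I would handle this by working in a compact tubular neighbourhood $\mathbb{T}^N\times\{\|A-h_0(\theta)\|\le 1\}$; since $M_0$ is attracting, this neighbourhood is forward invariant for small $\varepsilon$, and the standard Fenichel construction applies there.

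\emph{$C^2$ expansion.} Assume $\Gamma\in C^3$ and $H\in C^3$. Then $f$, $g$ and $h_0$ are $C^3$. Because $\mathbb{T}^N$ is a compact manifold with global periodic coordinates, the extension hypotheses of the graph-case expansion are automatically satisfied. Fenichel with $r=3$ makes $h_\varepsilon$ a $C^3$ map depending smoothly enough on $\varepsilon$ that \eqref{eq:expansion} holds in $C^2$, with $h_1$ given by Proposition~\ref{prop:h1}. Here this reads
\[
h_1=-\textnormal{D}_\theta h_0\, f(\theta,h_0(\theta)).
\]

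\emph{Main obstacle.} The delicate step is the $C^2$ remainder estimate: obtaining $o(\varepsilon)$ in $C^2$ rather than only in $C^0$. My first approach would be to use that $h_\varepsilon$ is $C^3$ jointly in $(\theta,\varepsilon)$, obtained by appending $\dot\varepsilon=0$ to the system, and then apply Taylor's theorem in $\varepsilon$ with the remainder controlled uniformly on compact $\mathbb{T}^N$. Joint regularity in $\varepsilon$ only yields one derivative less in total, which is the reason $C^3$ data is required to get a $C^2$ expansion. If that route fails, I would fall back on a contraction argument for the invariance equation \eqref{eq:invariance}. This works because $\textnormal{D}_A g=-\mathrm{Id}$ makes the equation equivalent to $h_\varepsilon=h_0-\varepsilon\,\textnormal{D}h_\varepsilon\, f(\theta,h_\varepsilon)$. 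One would show that this defines a contraction in a suitable weighted $C^2$ ball, and then read off the expansion from a single iteration.
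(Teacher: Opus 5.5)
Your proposal is correct and follows the same route as the paper: compactness of $M_0$ as the continuous image of $\mathbb{T}^N$, normal hyperbolicity from $\textnormal{D}_A g=-\mathrm{Id}_{N^2}$, and then Fenichel theory together with the graph-case $C^2$ expansion of Section~\ref{sec:gspt}; your graph-property, compact-tubular-neighbourhood and joint-$C^3$-in-$(\theta,\varepsilon)$ Taylor arguments fill in details that the paper delegates to that section. Your primary route for the $C^2$ remainder already suffices, so the fallback is unnecessary, and as stated it would not work: $h\mapsto h_0-\varepsilon\,\textnormal{D}h\,f(\theta,h)$ loses a derivative and therefore does not map a $C^2$ ball into $C^2$.
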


\begin{proof}
{Since $\mathbb{T}^N$ is compact and $H$ is continuous, $M_0$ is compact.} The layer problem for $A$
with $\theta$ frozen is
\[
a_{ij}'=-a_{ij}+H(\theta_i,\theta_j),
\]
whose linearisation in $A$ is {$D_A g=-I_{N^2}$, so all normal eigenvalues are exactly $-1$. Hence the
normal spectrum is uniformly separated from the imaginary axis and has strictly negative real part; therefore
$M_0$ is uniformly normally hyperbolic and attracting.}
{Fenichel theory applies. Here the slow base is $Y=\mathbb{T}^N$, which is a compact smooth
manifold, so the graph-case discussion from Section~\ref{sec:gspt} applies in local phase charts and gives
the stated first-order expansion in the $C^2$ topology.}
\end{proof}

\begin{remark}[Dynamical relevance of $M_\varepsilon$]
\label{rem:manifold_relevance}
Because the fast linearisation is exactly $-I_{N^2}$, the attraction towards $M_\varepsilon$ is not
marginal but uniformly exponential in fast time. Combining Proposition~\ref{prop:hyperbolic} with
Remark~\ref{rem:stable_fibres}, trajectories starting in a neighbourhood of $M_\varepsilon$ undergo an
initial layer of duration $O(\varepsilon)$ in slow time, after which the distance to a trajectory on the
slow manifold is exponentially small in $\tau/\varepsilon$. Thus the reduced flow on $M_\varepsilon$
describes the physically relevant post-transient behaviour for an open set of initial data. The analysis
is therefore not confined to a negligible region of phase space: the manifold is attracting, and the
reduction captures the long-time dynamics after the fast adaptive variables have relaxed.
\end{remark}

\subsection{First-order correction and reduced phase dynamics}

To compute the first-order correction \(h_1\), we use the invariance equation for the graph
\(A=h_\varepsilon(\theta)\):
\begin{equation}
\label{eq:inv_gen}
g(\theta,h_\varepsilon(\theta))
=
\varepsilon \textnormal{D}_\theta h_\varepsilon(\theta)\,f(\theta,h_\varepsilon(\theta)).
\end{equation}
For \eqref{eq:g_comp}, this becomes
\[
-h_\varepsilon(\theta)+h_0(\theta)
=
\varepsilon \textnormal{D}_\theta h_\varepsilon(\theta)\,f(\theta,h_\varepsilon(\theta)).
\]
Expanding
\[
h_\varepsilon(\theta)=h_0(\theta)+\varepsilon h_1(\theta)+o(\varepsilon)
\]
and matching powers of \(\varepsilon\) yields
\begin{equation}
\label{eq:h1_gen}
h_1(\theta)
=
- \textnormal{D}_\theta h_0(\theta)\,f(\theta,h_0(\theta)).
\end{equation}
Since \(h_{0,ij}(\theta)=H(\theta_i,\theta_j)\) depends only on \((\theta_i,\theta_j)\), this gives
\begin{equation}
\label{eq:h1_comp}
h_{1,ij}(\theta)
=
-\partial_{\theta_i}H(\theta_i,\theta_j)\,f_i(\theta,h_0(\theta))
-\partial_{\theta_j}H(\theta_i,\theta_j)\,f_j(\theta,h_0(\theta)).
\end{equation}

Substituting \(A=h_\varepsilon(\theta)\) into \eqref{eq:f_comp} yields the reduced phase dynamics on
\(M_\varepsilon\):
\begin{equation}
\label{eq:reduced_exact}
\dot{\theta}_i
=
\omega_i+\frac{1}{N}\sum_{j=1}^N h_{\varepsilon,ij}(\theta)\,\Gamma(\theta_j-\theta_i)
=:F_i^\varepsilon(\theta).
\end{equation}
We define the reduced vector field by
\[
F^\varepsilon(\theta):=(F_1^\varepsilon(\theta),\dots,F_N^\varepsilon(\theta)).
\]
Using
\[
h_\varepsilon(\theta)=h_0(\theta)+\varepsilon h_1(\theta)+o(\varepsilon),
\qquad
h_{0,ij}(\theta)=H(\theta_i,\theta_j),
\]
together with \eqref{eq:h1_comp}, we obtain
\begin{align}
\label{eq:reduced_expanded}
\dot{\theta}_i
&=
\omega_i
+\frac{1}{N}\sum_{j=1}^N H(\theta_i,\theta_j)\,\Gamma(\theta_j-\theta_i)
\nonumber\\
&\quad
-\frac{\varepsilon}{N}\sum_{j=1}^N \Gamma(\theta_j-\theta_i)
\Bigl[
\partial_{\theta_i}H(\theta_i,\theta_j)\,f_i(\theta,h_0(\theta))
+
\partial_{\theta_j}H(\theta_i,\theta_j)\,f_j(\theta,h_0(\theta))
\Bigr]
+o(\varepsilon).
\end{align}

Now recall that
\[
f_i(\theta,h_0(\theta))
=
\omega_i+\frac{1}{N}\sum_{k=1}^N H(\theta_i,\theta_k)\,\Gamma(\theta_k-\theta_i).
\]
Substituting this into \eqref{eq:reduced_expanded} and regrouping gives

\begin{align}
\label{eq:reduced_split}
\dot{\theta}_i
&=
\omega_i+\frac{1}{N}\sum_{j=1}^N H(\theta_i,\theta_j)\Gamma(\theta_j-\theta_i)
\nonumber\\
&\quad
+\varepsilon\Bigg[
-\frac{1}{N}\sum_{j=1}^N \Gamma(\theta_j-\theta_i)
\Big(
\partial_{\theta_i}H(\theta_i,\theta_j)\,\omega_i
+
\partial_{\theta_j}H(\theta_i,\theta_j)\,\omega_j
\Big)
\Bigg]
\nonumber\\
&\quad
+\frac{\varepsilon}{N^2}\sum_{j=1}^N\sum_{k=1}^N
\Gamma(\theta_j-\theta_i)
\Big[
-\partial_{\theta_i}H(\theta_i,\theta_j)\,H(\theta_i,\theta_k)\Gamma(\theta_k-\theta_i)
\nonumber\\
&\qquad\qquad\qquad\qquad
-\partial_{\theta_j}H(\theta_i,\theta_j)\,H(\theta_j,\theta_k)\Gamma(\theta_k-\theta_j)
\Big]
\nonumber\\
&\quad
+o(\varepsilon).
\end{align}

\noindent One sees that the first \(O(\varepsilon)\) term is still pairwise, whereas the double-sum term depends on triples
\((\theta_i,\theta_j,\theta_k)\) and is therefore the explicit source of the emergent triplet structure. It is therefore convenient to rewrite \eqref{eq:reduced_split} in the form
\begin{align}
\label{eq:reduced_triplet}
\dot{\theta}_i
&=
\omega_i+\frac{1}{N}\sum_{j=1}^N H(\theta_i,\theta_j)\Gamma(\theta_j-\theta_i)
+\frac{\varepsilon}{N}\sum_{j=1}^N \mathcal{P}_{ij}(\theta)
+\frac{\varepsilon}{N^2}\sum_{j=1}^N\sum_{k=1}^N T_{ijk}(\theta)
+o(\varepsilon),
\end{align}
where
\begin{align}
\label{eq:Pi}
\mathcal{P}_{ij}(\theta)
&=
-\Gamma(\theta_j-\theta_i)\Big(
\partial_{\theta_i}H(\theta_i,\theta_j)\,\omega_i
+
\partial_{\theta_j}H(\theta_i,\theta_j)\,\omega_j
\Big),
\\[0.5em]
\label{eq:Tijk}
T_{ijk}(\theta)
&=
-\Gamma(\theta_j-\theta_i)\,\partial_{\theta_i}H(\theta_i,\theta_j)\,
H(\theta_i,\theta_k)\Gamma(\theta_k-\theta_i)
\nonumber\\
&\quad
-\Gamma(\theta_j-\theta_i)\,\partial_{\theta_j}H(\theta_i,\theta_j)\,
H(\theta_j,\theta_k)\Gamma(\theta_k-\theta_j).
\end{align}

\subsection{Genuinely nonpairwise structure in the reduced flow}

The decomposition \eqref{eq:reduced_triplet} makes the triplet dependence explicit, but the mere appearance
of a double sum does not by itself prove that the reduced vector field is genuinely higher-order: a pairwise
field could be written with redundant summations. What matters is whether the reduced phase dynamics remain
pairwise-representable in the sense of Definition~\ref{def:pairwise}. We now give a rigorous sufficient
criterion, based on the mixed-derivative certificate from Corollary~\ref{cor:irreducibility}.

\begin{theorem}[Emergence of genuinely higher-order reduced dynamics]
\label{thm:main_higher_order}
Let $N\ge 3$, and consider the adaptive network model from Definition~\ref{def:model},
{with $H\in C^3(\mathbb T^2)$ and $\Gamma\in C^3(\mathbb T)$.} Let 
\[
M_\varepsilon=\{(\theta,A):A=h_\varepsilon(\theta)\}
\]
be the associated locally invariant, normally hyperbolic, attracting slow manifold from
Proposition~\ref{prop:hyperbolic}, and let $F^\varepsilon$ denote the reduced phase vector field on
$M_\varepsilon$, defined by \eqref{eq:reduced_exact} and expanded in \eqref{eq:reduced_triplet}.

Suppose there exist distinct indices $i,j,k$ and a point $\theta^*\in\mathbb T^N$ such that
\[
\partial_{\theta_j}\partial_{\theta_k}
\left(
\sum_{r=1}^N\sum_{s=1}^N T_{irs}(\theta)
\right)\Big|_{\theta=\theta^*}\neq 0,
\]
where $T_{irs}$ is defined by~\eqref{eq:Tijk}.
Then the reduced vector field $F^\varepsilon$ is not pairwise representable, i.e.
\[
F^\varepsilon\notin \mathcal{V}_{\mathrm{pair}}
\]
for all sufficiently small $\varepsilon>0$.
Equivalently, the reduced dynamics on $M_\varepsilon$ are genuinely higher-order in the sense of
Definition~\ref{def:ho}.
\end{theorem}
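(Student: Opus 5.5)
We will use the mixed-derivative certificate, Corollary~\ref{cor:irreducibility}. It suffices to show that $\partial_{\theta_j}\partial_{\theta_k}F_i^\varepsilon(\theta^*)\neq 0$ for all sufficiently small $\varepsilon>0$. Proposition~\ref{prop:pairwise_mixed} then excludes pairwise representability, and Proposition~\ref{prop:pairwise_invariant} shows the conclusion survives node-respecting reparametrisations. The strategy is to expand $\partial_{\theta_j}\partial_{\theta_k}F_i^\varepsilon$ in $\varepsilon$ and show that its leading coefficient is nonzero.

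First we treat the zeroth-order and pairwise parts. Write $F_i^\varepsilon$ as in \eqref{eq:reduced_exact}, with $h_\varepsilon=h_0+\varepsilon h_1+R_\varepsilon$, where $\|R_\varepsilon\|_{C^2}=o(\varepsilon)$ by Proposition~\ref{prop:hyperbolic}. This uses the $C^3$ hypotheses on $H$ and $\Gamma$. The zeroth-order part
\[
\omega_i+\tfrac1N\sum_\ell H(\theta_i,\theta_\ell)\Gamma(\theta_\ell-\theta_i)
\]
is pairwise, so its $\partial_{\theta_j}\partial_{\theta_k}$ derivative vanishes identically for $j\neq k$, both distinct from $i$. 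The same holds for the term $\frac{\varepsilon}{N}\sum_\ell \mathcal P_{i\ell}$, since each $\mathcal P_{i\ell}$ depends only on $(\theta_i,\theta_\ell)$. The $O(\varepsilon)$ part of $F_i^\varepsilon$ is $\frac{\varepsilon}{N}\sum_\ell h_{1,i\ell}\Gamma(\theta_\ell-\theta_i)$. By \eqref{eq:h1_comp} and the substitution of $f_i,f_\ell$ leading to \eqref{eq:reduced_triplet}, this equals
\[
\frac{\varepsilon}{N}\sum_\ell\mathcal P_{i\ell}+\frac{\varepsilon}{N^2}\sum_{r,s}T_{irs}
\]
exactly. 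Hence
\[
\partial_{\theta_j}\partial_{\theta_k}F_i^\varepsilon
=\frac{\varepsilon}{N^2}\,\partial_{\theta_j}\partial_{\theta_k}\sum_{r,s}T_{irs}
+\frac1N\sum_\ell\partial_{\theta_j}\partial_{\theta_k}\bigl(R_{\varepsilon,i\ell}(\theta)\Gamma(\theta_\ell-\theta_i)\bigr).
\]

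Next we bound the remainder. This is the step needing care: we must make sure the $o(\varepsilon)$ term in \eqref{eq:reduced_triplet} is small after taking two derivatives, not merely pointwise. By the Leibniz rule, the second derivative of the product $R_{\varepsilon,i\ell}\,\Gamma(\theta_\ell-\theta_i)$ is bounded by $C\|R_\varepsilon\|_{C^2}\|\Gamma\|_{C^2}$, and this is $o(\varepsilon)$ uniformly on $\mathbb T^N$. The point is that the expansion in Proposition~\ref{prop:hyperbolic} holds in the $C^2$ topology, which is precisely why the $C^3$ regularity is assumed. If this were only available in $C^0$, the argument would fail, so we rely explicitly on the $C^2$ statement from Section~\ref{sec:gspt}.

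Finally we conclude. Set $c:=\partial_{\theta_j}\partial_{\theta_k}\sum_{r,s}T_{irs}(\theta^*)\neq0$. Then
\[
\partial_{\theta_j}\partial_{\theta_k}F_i^\varepsilon(\theta^*)=\frac{\varepsilon}{N^2}c+o(\varepsilon).
\]
Choose $\varepsilon_0$ so that the remainder has absolute value less than $\varepsilon|c|/(2N^2)$ for $0<\varepsilon<\varepsilon_0$. For such $\varepsilon$ the mixed derivative is nonzero, so Corollary~\ref{cor:irreducibility} gives $F^\varepsilon\notin\mathcal V_{\mathrm{pair}}$, which is the claim.
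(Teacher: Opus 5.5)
Your proposal is correct and follows essentially the same route as the paper. You apply the mixed-derivative certificate, note that the constant, zeroth-order and $\mathcal{P}_{i\ell}$ terms are pairwise and drop out for distinct $i,j,k$, and conclude $\partial_{\theta_j}\partial_{\theta_k}F_i^\varepsilon(\theta^*)=\frac{\varepsilon}{N^2}c+o(\varepsilon)$; your explicit Leibniz bound of the remainder by $\|R_\varepsilon\|_{C^2}$ spells out what the paper compresses into the remark that the expansion holds in $C^2$.
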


\begin{proof}
By Corollary~\ref{cor:irreducibility}, it suffices to show that there exist distinct indices \(i,j,k\) and a point
\(\theta^\ast\in\mathbb{T}^N\) such that
\[
\partial_{\theta_j}\partial_{\theta_k}\dot{\theta}_i(\theta^\ast)\neq 0.
\]
{Because $H,\Gamma\in C^3$, the expansion \eqref{eq:reduced_triplet} is valid in $C^2$ by
Proposition~\ref{prop:hyperbolic} and the graph expansion from Section~\ref{sec:gspt}. Applying
\(\partial_{\theta_j}\partial_{\theta_k}\) to \eqref{eq:reduced_triplet}, with \(i,j,k\) distinct, is therefore legitimate.
We examine the}
terms on the right-hand side one by one. 
\noindent The intrinsic term \(\omega_i\) is constant, hence annihilated by
\(\partial_{\theta_j}\partial_{\theta_k}\).  The leading-order interaction term
\[
\frac{1}{N}\sum_{\ell=1}^N H(\theta_i,\theta_\ell)\Gamma(\theta_\ell-\theta_i)
\]
is pairwise, since each summand depends only on \((\theta_i,\theta_\ell)\). Therefore
\[
\partial_{\theta_j}\partial_{\theta_k}
\left(
\frac{1}{N}\sum_{\ell=1}^N H(\theta_i,\theta_\ell)\Gamma(\theta_\ell-\theta_i)
\right)
\equiv 0.
\]

\noindent Likewise, each \(\mathcal{P}_{i\ell}\) from \eqref{eq:Pi} is pairwise, since it depends only on
\((\theta_i,\theta_\ell)\). Hence
\[
\partial_{\theta_j}\partial_{\theta_k}
\left(
\frac{1}{N}\sum_{\ell=1}^N \mathcal{P}_{i\ell}(\theta)
\right)
\equiv 0.
\]

\noindent Thus the only possible contribution comes from the triplet term, and \eqref{eq:reduced_triplet} yields
\[
\partial_{\theta_j}\partial_{\theta_k}\dot{\theta}_i
=
\frac{\varepsilon}{N^2}
\partial_{\theta_j}\partial_{\theta_k}
\left(
\sum_{r=1}^N\sum_{s=1}^N T_{irs}(\theta)
\right)
+o(\varepsilon),
\]
By hypothesis, for some point $\theta^* \in \mathbb{T}^N$,
\[
c:=\partial_{\theta_j}\partial_{\theta_k}
\left(
\sum_{r=1}^N\sum_{s=1}^N T_{irs}(\theta)
\right)\Big|_{\theta=\theta^*}\neq 0.
\]
Therefore
\[
\partial_{\theta_j}\partial_{\theta_k}\dot{\theta}_i(\theta^\ast)
=
\frac{\varepsilon}{N^2}c+o(\varepsilon),
\]
which is nonzero for all sufficiently small $\varepsilon>0$. Consequently, by Corollary \ref{cor:irreducibility} the reduced vector field $F^\varepsilon$ does not belong to \(\mathcal{V}_{\mathrm{pair}}\). By
Definition~\ref{def:ho}, the system is therefore genuinely higher-order.
\end{proof}

\begin{corollary}[Adaptive Kuramoto case]
\label{cor:kuramoto}
Let \(N\ge 3\), let
\[
\Gamma(\phi)=\sin\phi,
\qquad
H(u,v)=\alpha+\cos(u-v),
\]
and assume \(\alpha\neq 0\). Then, for all sufficiently small \(\varepsilon>0\), the reduced phase vector
field on \(M_\varepsilon\) is genuinely nonpairwise, that is,
\[
F^\varepsilon\notin\mathcal{V}_{\mathrm{pair}}.
\]
\end{corollary}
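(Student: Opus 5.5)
The plan is to apply Theorem~\ref{thm:main_higher_order} directly. I will exhibit distinct indices $i,j,k$ (say $i=1,j=2,k=3$, which is possible since $N\ge 3$) and an explicit point $\theta^\ast$ at which the mixed derivative $\partial_{\theta_j}\partial_{\theta_k}\sum_{r,s}T_{irs}$ is a nonzero multiple of $\alpha$. The regularity hypotheses are immediate, since $\sin$ and $\alpha+\cos(u-v)$ are real-analytic.

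\emph{Step 1: specialise $T_{irs}$.} With $\partial_uH(u,v)=-\sin(u-v)$ and $\partial_vH(u,v)=\sin(u-v)$, formula \eqref{eq:Tijk} simplifies to
\[
T_{irs}=-\sin^2(\theta_r-\theta_i)\,(\alpha+\cos(\theta_s-\theta_i))\sin(\theta_s-\theta_i)
+\sin^2(\theta_r-\theta_i)\,(\alpha+\cos(\theta_s-\theta_r))\sin(\theta_s-\theta_r).
\]

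\emph{Step 2: isolate the contributing summands.} A summand survives $\partial_{\theta_j}\partial_{\theta_k}$ only if it depends on both $\theta_j$ and $\theta_k$. Each summand depends only on $\theta_i,\theta_r,\theta_s$, so this forces $\{r,s\}=\{j,k\}$. (Diagonal terms $r=s$ carry $\sin(0)=0$ in the second part, and in the first part they depend only on $(\theta_i,\theta_r)$.)

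Set $x=\theta_j-\theta_i$ and $y=\theta_k-\theta_i$, with $p(x)=\sin^2x$ and $q(y)=(\alpha+\cos y)\sin y$. The first part then contributes $-(p'(x)q'(y)+p'(y)q'(x))$. The second part contributes $\partial_x\partial_y[W(x,y)+W(y,x)]$, where $W(x,y)=\sin^2x\,(\alpha+\cos(y-x))\sin(y-x)$. Since $\theta_j$ enters only through $x$ and $\theta_k$ only through $y$, the mixed $\theta$-derivative equals the mixed $(x,y)$-derivative.

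\emph{Step 3: evaluate at a point.} The resulting quantity is affine in $\alpha$, say $c(x,y)=\alpha A(x,y)+B(x,y)$. I take $\theta^\ast$ with $\theta_i=0$, $\theta_j=\pi/2$, $\theta_k=0$ and the remaining phases arbitrary, i.e.\ $(x,y)=(\pi/2,0)$.

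At this point $\sin x\cos x=0$ and $\sin y=0$, so the first-part contributions vanish for both the $\alpha$-part and the $\alpha$-free part. For the second part, the $\alpha$-coefficient of $\partial_x\partial_y W(x,y)$ is
\[
2\sin x\cos x\cos(y-x)+\sin^2x\sin(y-x),
\]
which equals $-1$ at $(\pi/2,0)$. The swapped term $W(y,x)$ carries factors $\sin y$ and $\sin y\cos y$, so it gives $0$. The $\alpha$-free part $\tfrac12\sin^2x\sin(2(y-x))$ and its swap have mixed derivatives involving $\sin x\cos x$, $\sin(2(y-x))=\sin(-\pi)$ and $\sin y$, all of which vanish at this point.

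Hence $c=-\alpha\neq 0$. Theorem~\ref{thm:main_higher_order} then yields $F^\varepsilon\notin\mathcal V_{\mathrm{pair}}$ for all sufficiently small $\varepsilon>0$.

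\emph{Main obstacle.} The difficulty is purely computational: keeping the signs of $\partial_uH$ versus $\partial_vH$ straight, and making sure no other $(r,s)$ pairs contribute. I would double-check the value $c=-\alpha$ by a symbolic computation, or by an independent expansion of $c(x,y)$ near $(\pi/2,0)$.

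This computation also explains the hypothesis $\alpha\neq0$. At $\alpha=0$ the chosen point gives $B=0$, so the certificate fails there. One would need a different point, or the statement might genuinely fail in that case, so the corollary restricts to $\alpha\neq 0$. If my sign bookkeeping were off and the $\alpha$-coefficient vanished at $(\pi/2,0)$, I would fall back on treating $A(x,y)$ as a nontrivial trigonometric polynomial and showing it is not identically zero, for instance by computing its Fourier coefficient of $\sin x$ or $\cos x$ at $y=0$.
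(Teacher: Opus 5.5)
Your proposal is correct and follows the paper's proof essentially step for step. It uses the same point $\theta_i^\ast=\theta_k^\ast=0$, $\theta_j^\ast=\pi/2$ and the same reduction to the two summands $T_{ijk}$ and $T_{ikj}$, and it reaches the same value $-\alpha$; your observation that $W(y,x)$ vanishes because of the factor $\sin^2 y$ is exactly the paper's double-zero argument for $T_{ikj}$.

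One small aside: your guess that the statement might fail at $\alpha=0$ is unfounded. Your $B(x,y)$ works out to $-\sin(2x+2y)+\sin(4x-2y)+\sin(4y-2x)$, which equals $1-\sqrt{2}\neq 0$ at $(x,y)=(\pi/8,0)$, so the hypothesis $\alpha\neq 0$ is needed only for the particular point chosen.
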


\begin{proof}
{For this choice of $\Gamma$ and $H$, both functions are smooth, so the $C^3$ hypothesis of
Theorem~\ref{thm:main_higher_order} is satisfied.}

Fix distinct indices \(i,j,k\). Choose \(\theta^\ast\in\mathbb{T}^N\) by
\[
\theta_i^\ast=0,\qquad
\theta_j^\ast=\frac{\pi}{2},\qquad
\theta_k^\ast=0,
\]
and set \(\theta_\ell^\ast=0\) for all \(\ell\notin\{i,j,k\}\).

For this Kuramoto choice,
\[
\partial_{\theta_i}H(\theta_i,\theta_j)=\sin(\theta_j-\theta_i),
\qquad
\partial_{\theta_j}H(\theta_i,\theta_j)=-\sin(\theta_j-\theta_i),
\]
so \eqref{eq:Tijk} gives
\[
T_{ijk}(\theta)
=
-\sin^2(\theta_j-\theta_i)\,
\bigl(\alpha+\cos(\theta_i-\theta_k)\bigr)\sin(\theta_k-\theta_i)
\]
\[
\qquad
+\sin^2(\theta_j-\theta_i)\,
\bigl(\alpha+\cos(\theta_j-\theta_k)\bigr)\sin(\theta_k-\theta_j).
\]
Define
\[
P(y):=\sin^2 y,
\qquad
Q(x):=(\alpha+\cos x)\sin x.
\]
Then the first line is \(-P(\theta_j-\theta_i)Q(\theta_k-\theta_i)\), so
\[
\partial_{\theta_j}\partial_{\theta_k}\Bigl[-P(\theta_j-\theta_i)Q(\theta_k-\theta_i)\Bigr]
=-P'(\theta_j-\theta_i)Q'(\theta_k-\theta_i).
\]
At \(\theta^\ast\), \(\theta_j^\ast-\theta_i^\ast=\pi/2\), hence
\(P'(\theta_j^\ast-\theta_i^\ast)=2\sin(\pi/2)\cos(\pi/2)=0\), so the first line gives no contribution.
For the second line, with \(x:=\theta_k-\theta_j\),
\[
\partial_{\theta_j}\partial_{\theta_k}\Bigl[P(\theta_j-\theta_i)Q(\theta_k-\theta_j)\Bigr]
=
P'(\theta_j-\theta_i)Q'(x)-P(\theta_j-\theta_i)Q''(x).
\]
Evaluating at \(\theta^\ast\), where \(P'(\pi/2)=0\), \(P(\pi/2)=1\), and \(x=-\pi/2\), gives
\[
\partial_{\theta_j}\partial_{\theta_k}T_{ijk}(\theta^\ast)
=
-\,Q''(-\pi/2)=-\alpha\neq 0.
\]
Now consider
\[
S_i(\theta):=\sum_{r=1}^N\sum_{s=1}^N T_{irs}(\theta).
\]
Since $T_{irs}$ depends only on $(\theta_i,\theta_r,\theta_s)$, the mixed derivative
\(\partial_{\theta_j}\partial_{\theta_k}T_{irs}\) can be nonzero only when
\((r,s)=(j,k)\) or \((r,s)=(k,j)\). Therefore,
\[
\partial_{\theta_j}\partial_{\theta_k}S_i(\theta^\ast)
=
\partial_{\theta_j}\partial_{\theta_k}T_{ijk}(\theta^\ast)
+\partial_{\theta_j}\partial_{\theta_k}T_{ikj}(\theta^\ast).
\]
For \(T_{ikj}\), both terms contain the common factor \(\sin^2(\theta_k-\theta_i)\), which has a
double zero at \(\theta_k=\theta_i\). By the product rule, every term in
\(\partial_{\theta_k}T_{ikj}\) retains a factor of \(\sin(2(\theta_k-\theta_i))\) or
\(\sin^2(\theta_k-\theta_i)\), both of which vanish at \(\theta_k^\ast=\theta_i^\ast\) and are
independent of \(\theta_j\). Hence applying \(\partial_{\theta_j}\) preserves the vanishing prefactors,
giving \(\partial_{\theta_j}\partial_{\theta_k}T_{ikj}(\theta^\ast)=0\), and
\[
\partial_{\theta_j}\partial_{\theta_k}S_i(\theta^\ast)
=
\partial_{\theta_j}\partial_{\theta_k}T_{ijk}(\theta^\ast)
=-\alpha\neq 0.
\]
Therefore the hypothesis of Theorem~\ref{thm:main_higher_order} is satisfied, and the conclusion follows:
\(F^\varepsilon\notin\mathcal{V}_{\mathrm{pair}}\) for all sufficiently small \(\varepsilon>0\).
\end{proof}

\begin{remark}[Why the \(O(\varepsilon)\) triplet term is not negligible]
\label{rem:why_not_negligible}
Although the emergent triplet contribution is multiplied by \(\varepsilon\), it is the leading correction
in the first-order Fenichel reduced dynamics \cite{Fenichel1979,Kuehn2015}. In particular, from
\eqref{eq:reduced_triplet},
\[
F_i^{(0)}(\theta)
:=
\omega_i+\frac{1}{N}\sum_{j=1}^N H(\theta_i,\theta_j)\Gamma(\theta_j-\theta_i),
\]
\[
\dot\theta_i
=
F_i^{(0)}(\theta)
+\frac{\varepsilon}{N}\sum_{j=1}^N \mathcal{P}_{ij}(\theta)
+\frac{\varepsilon}{N^2}\sum_{j=1}^N\sum_{k=1}^N T_{ijk}(\theta)
+o(\varepsilon),
\]
so omitting the triplet term leaves an $O(\varepsilon)$ error rather than an $o(\varepsilon)$ one, and
hence lowers the reduced model from first-order to zeroth-order asymptotic accuracy.

Moreover, over slow-time windows of length \(\tau=O(1/\varepsilon)\), an \(O(\varepsilon)\) perturbation in the
vector field can accumulate to \(O(1)\) phase effects; thus averaged frequencies, phase drifts, and relative
phase offsets may change at leading observable order. Near bifurcation thresholds or partially degenerate
phase configurations (for example, near synchrony), first-order terms often determine
stability selection, so the triplet correction can become the first nonvanishing effective interaction term.

Finally, the relevance is structural, not merely quantitative. Theorem~\ref{thm:main_higher_order} and
Corollary~\ref{cor:kuramoto} show that for sufficiently small \(\varepsilon>0\) the reduced vector field may lie
outside \(\mathcal{V}_{\mathrm{pair}}\). Hence the reduced dynamics need not be simply a small perturbation of a
pairwise model; they can belong to a different interaction class. This aligns with the broader viewpoint that
interaction order can qualitatively alter collective behaviour \cite{Battiston2020PhysRep,Boccalettietal1}.
\end{remark}
\section{Discussion and Outlook}

This work establishes a concrete mechanism for the emergence of effective higher-order interactions from
microscopically pairwise adaptive networks. In the fast-adaptation regime, Fenichel reduction yields an
\(O(\varepsilon)\) correction on the slow manifold that contains irreducible triplet dependence. The mixed-derivative
criterion then shows that the reduced phase vector field need not be pairwise-representable in node coordinates.
In this sense, the class of pairwise network vector fields is not closed under slow-manifold reduction.

For small \(\varepsilon>0\), the conclusion is structural rather than merely perturbative: the reduced dynamics can leave \(\mathcal{V}_{\mathrm{pair}}\) altogether. The adaptive Kuramoto specialisation shows that this is not a formal artefact but a mechanism realised within a standard model class. In the fast-adaptation setting, the source of the higher-order terms is also especially transparent, namely the algebraic substitution of the invariant slow-manifold graph into the phase dynamics. This places the present result alongside other reduction procedures, such as phase reduction and center-manifold/normal-form reduction near bifurcations, that can also generate effective nonpairwise terms.

Since our approach shows that the multiscale reduction framework via GSPT can yield higher-order coupling on a reduced slow manifold, there are several natural conjectures regarding other reduction schemes for network dynamics. Indeed, there are many other routes that explicitly or implicitly use multiscale invariant manifold reduction for network dynamics. Examples are moment closure methods~\cite{KuehnMC} and spectral coarse-graining~\cite{GfellerDeLosRios}. For any class of these reduction methods for networks, we expect the possibility to obtain higher-order interactions in a reduced model. In the reverse direction, it is then evident that higher-order coupling can often also be removed if one can reverse a multiscale reduction and lift/unfold a higher-order model to a higher-dimensional pairwise network model. {Another natural conjectural direction is to investigate how the present fast--slow reduction framework interacts with large-population and continuum limits for adaptive oscillator networks, and more broadly how effective higher-order structure may persist, transform, or re-emerge in macroscopic descriptions.
A further connection worth exploring concerns state-dependent effective pairwise interactions studied through coupling functions with dead zones~\cite{AshwinBickPoignard2019,AshwinBickPoignard2021}. In a formal singular-limit interpretation of the present adaptive model, indicator-type choices of the adaptation law $H$ yield precisely such dead-zone couplings at leading order. Since such choices fall outside the smooth ($C^3$) hypotheses used here, making this connection rigorous would require either a smooth approximation argument or a separate nonsmooth analysis, and this presents an interesting direction for future work.}

\section*{Acknowledgements}
This work was supported by the European Union's Horizon Europe Marie Sk\l odowska-Curie Actions under the ``BeyondTheEdge: Higher-Order Networks and Dynamics'' project (Grant Agreement No.~101120085).

%Related directions include bifurcation and stability effects induced by the emergent triplet term, as well as
%extensions to heterogeneous frequencies, sparse/random interaction structures, and other adaptive coupling laws.
\bibliographystyle{plain}
\bibliography{bibliography}
\end{document}